\newtheorem{theorem}{Theorem}[section]
\newtheorem{lemma}[theorem]{Lemma}
\newtheorem{example}[theorem]{Example}
\theoremstyle{definition}
\theoremstyle{remark}
\newtheorem{remark}[theorem]{Remark}
\newcommand{\sw}{{\textsf{w}}\hskip0.01cm}
\newcommand{\sx}{{\textsf{x}}\hskip0.01cm}
\newcommand{\sy}{{\textsf{y}}\hskip0.01cm}
\newcommand{\tp}{{\rm TP}\hskip0.02cm}
\newcommand{\tc}{{\rm TC}\hskip0.02cm}
\def\mh{\mathcal{H}}
\def\fs{\mathfrak{s}}
\def\fr{\mathfrak{r}}
\def\mju{\mathcal{U}}
\def\f2{\mathbb{F}_2}
\def\lip{\hskip0.02cm{\rm Lip}\hskip0.01cm}
\def\supp{\hskip0.02cm{\rm supp}\hskip0.01cm}
\newcommand{\lin}{{\rm lin}\hskip0.02cm}
\newcommand{\1}{\mathbf{1}}
\newcommand\remove[1]{}
\begin{document}

\title{\LARGE Complementability of isometric copies of $\ell_1$ in transportation cost spaces}

\author{Sofiya Ostrovska and Mikhail~I.~Ostrovskii}

\date{\today}
\maketitle

\begin{abstract} This work aims to establish new results pertaining to the structure of transportation cost spaces.
Due to the fact that those spaces were studied and applied in
various contexts, they have also become known under different
names such as Arens-Eells spaces, Lipschitz-free spaces, and
Wasserstein spaces. The main outcome of this paper states that if
a metric space $X$ is such that the transportation cost space on
$X$ contains an isometric copy of $\ell_1$, then it contains a
$1$-complemented isometric copy of $\ell_1$.
\end{abstract}

{\small \noindent{\bf Keywords.} Arens-Eells space, Banach space,
earth mover distance, Kantorovich-Rubin\-stein distance,
Lipschitz-free space, transportation cost, Wasserstein distance}

{\small \noindent{\bf 2020 Mathematics Subject Classification.}
Primary: 46B04; Secondary: 46B20, 46B85, 91B32}



\bigskip

\hfill{\sf In memory of all the people who have sacrificed}

\hfill{\sf their lives while fighting for Ukraine since 2014}

\section{Introduction}

In this paper we continue the study of Banach-space-theoretical
properties of transportation cost spaces. The study of
transportation cost spaces was launched by Kantorovich
\cite{Kan42}, see also \cite{KG49}. As time passed, these spaces
have proven to possess the high degree of importance within a
variety of directions. This, in turn, have led to the diversity of
names used for the spaces, the most popular names are mentioned in
the Abstract. We stick to the term  {\it transportation cost
space} since, in our opinion,  it immediately clarifies the circle
of discussed problems and is consistent with the history of the
subject. A detailed survey on the development of those notions
along with relevant historical comments is presented in
\cite[Section 1.6]{OO19}.

Before we begin, let us recall some necessary definitions and
facts. Let $(X,d)$ be a metric space.  If $f:X\rightarrow
\mathbb{R}$ is a function possessing a finite support and
satisfying the condition $\sum_{v\in \supp f}f(v)=0$, then, in a
natural way,  it can be  viewed  as a \emph{transportation
problem} (on $X$) of certain product from sites where it is
available  ($f(v)>0$) to those where it is demanded ($f(v)<0$).

Every transportation problem $f$ admits a presentation of the
form:
\begin{equation}\label{E:TranspPlan} f=a_1(\1_{x_1}-\1_{y_1})+a_2(\1_{x_2}-\1_{y_2})+\dots+
a_n(\1_{x_n}-\1_{y_n}),\end{equation} where $a_i\ge 0$,
$x_i,y_i\in X$, and $\1_u(x), u\in X$ stands for the {\it
indicator function} of $u$. Since equality \eqref{E:TranspPlan}
can be regarded as a plan of carrying  $a_i$ units of the product
from $x_i$ to $y_i$, every  representation of this form is said to
be a \emph{transportation plan} for $f$. In this interpretation,
the sum $\sum_{i=1}^n a_id(x_i,y_i)$ defines the
 \emph{cost} of that plan.

In the sequel, $\tp(X)$ denotes the real vector space of all
transportation problems (on $X$). We endow $\tp(X)$ with the
\emph{transportation cost norm} (or \emph{transportation cost},
for short). Namely, for $f\in\tp(X)$, the norm $\|f\|_{\tc}$ is
defined as the infimum of costs taken over all transportation
plans given by \eqref{E:TranspPlan}.

For infinite $X$, the space $\tp(X)$  with $\|\cdot\|_{\tc}$ may
not be complete, its completion is called the \emph{transportation
cost space}  and denoted by $\tc(X)$. When  $X$ is finite, the
above spaces $\tc(X)$ and $\tp(X)$ are identical  as sets. The
notation $\tc(X)$ is employed to highlight the
 normed vector space structure when we need it.
\medskip

It can be effortlessly derived from the triangle inequality that,
whenever $f\in\tc(X)$ has a finite support, the infimum of costs
of transportation plans   is attained. Moreover, it is easy to
notice that   this occurs for a  plan with $\{x_i\}=\{v:~f(v)>0\}$
and $\{y_i\}=\{v:~f(v)<0\}$. Notice, that a transportation plan
that provides the infimum need not be unique. Any such a
transportation plan for $f$, whose cost equals $\|f\|_\tc$ is
called  an \emph{optimal transportation plan} for $f\in\tp(X)$
because this plan has the minimal possible cost. See \cite{OO22}
for a more detailed introduction.

Prior to presenting our results, it seems appropriate to outline
the motivation for studying  transportation cost spaces:

(1) The dual space of $\tc(X)$ is the space of Lipschitz functions
on $X$ vanishing at a specified point and equipped with its
natural norm. This makes the space $\tc(X)$ an object of Classical
Analysis, especially in cases where $X$ is a classical metric
space like $\mathbb{R}^n$.

(2) A metric space $X$ admits a canonical isometric embedding into
$\tc(X)$ (Arens--Eells observation \cite{AE56}). This fact makes
$\tc(X)$ a natural object of study in the theory of Metric
Embeddings, see \cite[Chapter 10]{Ost13}.

(3) The norm in this space can be interpreted as a transportation
cost.

(4) The transportation cost space $\tc(X)$ can be regarded as a
kind of a {\it linearization} of $X$, and can be used to
generalize Banach-space-theoretical notions to the case of metric
spaces. This approach was suggested by Bill Johnson; later his
idea was described in \cite[p.~223]{Bou86}, where its limitations
were discovered. See also the discussion in \cite{Nao18}.

(5) Transportation cost spaces were applied  to solve some
important problems of the  Banach space theory, both of linear and
non-linear theories. The respective program was initiated by
Godefroy-Kalton in \cite{GK03} and  significantly developed by
Kalton in  \cite{Kal04}.

In this paper,   the study of Banach space geometry of $\tc(X)$ is
continued. More specifically, we focus at  studying the relations
between its structure and the structure of the space $\ell_1$.
Available related results can be found in
 \cite{AACD20,APP21,CDW16,CJ17,DKP16,DKO20,God10,KMO20,OO19,OO20}.

As the most closely related predecessors of this research, the
following   results have to be cited:

\begin{theorem}[\cite{KMO20}]\label{T:2npts}If a metric space $X$ contains $2n$ elements, then $\tc(X)$ contains a
$1$-complemented subspace isometric to $\ell_1^n$.
\end{theorem}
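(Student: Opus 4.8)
I would fix once and for all a \emph{minimum-weight perfect matching} of $X$: a partition of $X$ into pairs $\{x_i,y_i\}$, $i=1,\dots,n$, minimizing $\sum_i d(x_i,y_i)$ over all such partitions (there are finitely many). Put $d_i=d(x_i,y_i)$, $u_i=\1_{x_i}-\1_{y_i}\in\tc(X)$ and $\hat u_i=u_i/d_i$, so $\|\hat u_i\|_{\tc}=1$. I claim $E:=\mathrm{span}\{\hat u_1,\dots,\hat u_n\}$ is the desired subspace, and that the whole theorem reduces to:
\[
(\star)\qquad\text{there is a $1$-Lipschitz map }\Psi\colon X\to\ell_1^n\text{ with }\Psi(x_i)-\Psi(y_i)=d_i\,e_i\ \ (i=1,\dots,n).
\]
Given such a $\Psi$, define $\widetilde\Psi\colon\tc(X)\to\ell_1^n$ by $\widetilde\Psi(f)=\sum_{p}f(p)\Psi(p)$; from the very definition of $\|\cdot\|_{\tc}$ as an infimum of costs and the $1$-Lipschitz property of $\Psi$ one gets $\|\widetilde\Psi\|\le1$, while $\widetilde\Psi(\hat u_i)=e_i$. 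Hence $\sum_i|a_i|=\|\widetilde\Psi(\sum_i a_i\hat u_i)\|_{\ell_1^n}\le\|\sum_i a_i\hat u_i\|_{\tc}\le\sum_i|a_i|$, so $\widetilde\Psi|_E$ is an isometry of $E$ onto $\ell_1^n$ and $E\cong\ell_1^n$. Letting $\iota\colon\ell_1^n\to\tc(X)$ be its inverse (an isometric embedding with $\iota(e_i)=\hat u_i$), the operator $P:=\iota\circ\widetilde\Psi$ satisfies $\|P\|\le1$, $P(\tc(X))=E$ and $P(\hat u_j)=\hat u_j$, so $P$ is a norm-one projection onto $E$. Thus $(\star)$ yields both conclusions simultaneously.

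\textbf{Why a minimum-weight matching.} For $n=1$ there is nothing to prove, since then $\tc(X)$ is one-dimensional. For two indices $i\ne j$, minimality over the three perfect matchings of $\{x_i,y_i,x_j,y_j\}$ gives
\[
d_i+d_j\le d(x_i,x_j)+d(y_i,y_j)\qquad\text{and}\qquad d_i+d_j\le d(x_i,y_j)+d(y_i,x_j),
\]
which are exactly the inequalities needed to place four $\ell_1^2$-images realizing $\Psi(x_i)-\Psi(y_i)=d_ie_i$, $\Psi(x_j)-\Psi(y_j)=d_je_j$. More generally, for any $S\subseteq\{1,\dots,n\}$, any cyclic ordering $i_1,\dots,i_k$ of $S$ and any orientation $(a_i,b_i)\in\{(x_i,y_i),(y_i,x_i)\}$ of the pairs, replacing along the cycle the pairs $\{x_{i_j},y_{i_j}\}$ by the pairs $\{b_{i_j},a_{i_{j+1}}\}$ produces another perfect matching of $X$, so minimality forces $\sum_j d_{i_j}\le\sum_j d(b_{i_j},a_{i_{j+1}})$. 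By the standard ``no negative cycle'' feasibility criterion for systems of difference constraints, these are precisely the inequalities guaranteeing that for each $\epsilon\in\{\pm1\}^n$ there is a $1$-Lipschitz $\phi_\epsilon\colon X\to\mathbb{R}$ with $\phi_\epsilon(x_i)-\phi_\epsilon(y_i)=\epsilon_id_i$ for all $i$ (every negative cycle of the associated graph has the form traced above). So each extreme point of the ball of $\ell_\infty^n$ is ``saturable'' by the images of the pairs.

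\textbf{The crux.} Property $(\star)$ demands more than one saturator per sign vector: writing $\Psi=(\psi_1,\dots,\psi_n)$, it asks for scalar functions $\psi_i$ with $\psi_i(x_j)-\psi_i(y_j)=\delta_{ij}d_i$ and $\sum_i|\psi_i(p)-\psi_i(q)|\le d(p,q)$ for all $p,q$; equivalently, the simultaneous saturators $\phi_\epsilon:=\sum_i\epsilon_i\psi_i$ must depend \emph{linearly} on $\epsilon$. (Merely averaging the $\phi_\epsilon$ from the previous paragraph gives $\psi_i$ for which $\sum_i\epsilon_i\psi_i$ only has Lipschitz constant $O(\sqrt n)$, which is insufficient.) I would prove $(\star)$ by induction on $n$: delete a pair $\{x_n,y_n\}$ from $X$, observe that the remaining pairs still form a minimum-weight matching of $X\setminus\{x_n,y_n\}$, invoke the inductive $\Psi'\colon X\setminus\{x_n,y_n\}\to\ell_1^{n-1}$, and extend it to $\Psi\colon X\to\ell_1^{n-1}\oplus_1\mathbb{R}=\ell_1^n$ by $\Psi(p)=(\Psi'(p),0)$ for $p\ne x_n,y_n$ and $\Psi(y_n)=(\zeta,\sigma)$, $\Psi(x_n)=(\zeta,\sigma+d_n)$, where $\zeta\in\ell_1^{n-1}$, $\sigma\in\mathbb{R}$ must be chosen so that
\[
\|\zeta-\Psi'(p)\|_1+|\sigma+d_n|\le d(x_n,p),\qquad \|\zeta-\Psi'(p)\|_1+|\sigma|\le d(y_n,p)
\]
for all $p\in X\setminus\{x_n,y_n\}$. \emph{The main obstacle is the feasibility of this last finite system}: since $\ell_1$-balls do not have Helly number $2$, the existence of a common center $\zeta$ compatible with a single $\sigma$ will not follow from the pairwise minimum-matching inequalities alone, and I expect it to require either the full family of cyclic inequalities above or a strengthened inductive hypothesis recording extra structure of $\Psi'$ (for instance a bound on the $\ell_1$-diameter of $\Psi'(X\setminus\{x_n,y_n\})$ or a McShane-type partial-extension property), with the choice of which pair to delete (e.g.\ one realizing the smallest edge) made to facilitate the extension. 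Once $(\star)$ is established, the reduction above finishes the proof.
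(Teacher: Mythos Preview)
Your reduction is sound: the equivalence between a norm-$1$ projection onto $E=\mathrm{span}\{\hat u_i\}$ and the existence of a $1$-Lipschitz $\Psi\colon X\to\ell_1^n$ with $\Psi(x_i)-\Psi(y_i)=d_ie_i$ is exactly right, and your observation that each sign vector $\epsilon$ is individually saturable (via the no-negative-cycle criterion applied to the cyclic matching inequalities) correctly establishes that $E$ is isometric to $\ell_1^n$. But the proof stops precisely at the hard part. The inductive extension step is not justified, and you say so yourself: the feasibility of the system for $(\zeta,\sigma)$ does not follow from pairwise or cyclic matching inequalities, $\ell_1$-balls have no finite Helly number, and an arbitrary inductive $\Psi'$ need not extend at all---there are many $1$-Lipschitz $\Psi'$ realizing the first $n-1$ pairs, and only some of them admit an extension. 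Your suggested remedies (a strengthened inductive hypothesis, a judicious choice of which pair to delete) are not specified, so as written the argument has a genuine gap at exactly the point that carries all the content.

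The paper does not proceed by induction. Its Lemma~\ref{L:FiniteMatch} (which specializes to the present theorem when $|X|=2n$) builds the coordinate functions $\psi_i=t_{i,n}$ globally, using the linear-programming dual of the minimum-weight perfect-matching problem. By Edmonds' theory there is an optimal dual solution $(\sy_C)$ supported on a \emph{nested} (laminar) family $\mathcal H$ of odd subsets of $X$, with $\sy_C\ge0$ even on trivial cuts because the weights come from a metric. Each $t_{i,n}$ is then written explicitly as a sum of ``bump'' functions $s_{\lambda,\theta,H}$ indexed by the sets $H\in\mathcal H$ lying on the chains from $\{x_i\}$ and $\{y_i\}$ up to their meet; biorthogonality comes from condition~\eqref{E:Int1} ($|M\cap C|=1$ for active cuts), and the crucial inequality $\sum_i|t_{i,n}(z)-t_{i,n}(w)|\le d(z,w)$ comes from reassembling the relevant bumps into a single $1$-Lipschitz function $t_{D,F}$ and invoking {\bf (D2)}. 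In other words, the linear dependence of your $\phi_\epsilon$ on $\epsilon$---which you correctly identify as the crux and which naive averaging cannot deliver---is obtained not from an induction but from the laminar structure of the LP dual. If you want to salvage an inductive argument, the natural strengthening is to carry the laminar family and its weights through the induction (peeling off a maximal set of $\mathcal H$ rather than an arbitrary pair), at which point you are essentially reproducing the paper's construction.
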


To formulate the next theorem, we introduce, for any finite set
$\{v_i\}_{i=1}^m$ in $(X,d)$, the complete weighted graph
$K(\{v_i\}_{i=1}^m)$ with vertex set $\{v_i\}_{i=1}^m$ and with
the weight of an edge $uv$ equal to $d(u,v)$.

\begin{theorem}[\cite{OO20}]\label{T:Contell_1} The space $\tc(X)$ contains $\ell_1$ isometrically
if and only if there exists a sequence of pairs
$\{x_i,y_i\}_{i=1}^\infty$ in $X$, with all elements distinct,
such that each set $\{x_iy_i\}_{i=1}^n$ of edges is a minimum
weight perfect matching in the $K(\{x_i,y_i\}_{i=1}^n)$.
\end{theorem}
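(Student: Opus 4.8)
\noindent\emph{Proof proposal.} For $x\ne y$ in $X$ put $m_{xy}:=(\1_x-\1_y)/d(x,y)$, so that $\|m_{xy}\|_{\tc}=1$ and $m_{yx}=-m_{xy}$. For the forward implication the plan is to verify that $e_k\mapsto m_{x_ky_k}$ extends to a linear isometry of $\ell_1$ into $\tc(X)$; for the converse the plan is to extract from a given isometric copy of $\ell_1$ a basis made of molecules of $\tc(X)$ and then to recognise the minimum weight perfect matchings combinatorially. I shall use the following \emph{hereditary} observation: if, for some finite $S\subseteq\mathbb N$, $\{x_iy_i\}_{i\in S}$ is not a minimum weight perfect matching of $K(\{x_i,y_i\}_{i\in S})$, then adjoining the edges $\{x_iy_i\}_{i\notin S}$ to a cheaper competitor shows the same for every finite set containing $S$; hence the matching hypothesis passes to all finite subfamilies.

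\noindent\emph{``If''.} By density it suffices to prove $\|\sum_{k\le N}a_k m_{x_ky_k}\|_{\tc}=\sum_{k\le N}|a_k|$ for each $N$ and each choice of scalars. Interchanging $x_k$ and $y_k$ leaves the edge set $\{x_iy_i\}$, hence the matching hypothesis, unchanged and replaces $m_{x_ky_k}$ by its opposite, so we may take all $a_k>0$; setting $b_k:=a_k/d(x_k,y_k)$, the claim becomes $\|f\|_{\tc}=\sum_k b_k d(x_k,y_k)$ with $f=\sum_{k\le N}b_k(\1_{x_k}-\1_{y_k})$. Since the $2N$ points are distinct, the positive and negative parts of $f$ sit on the disjoint sets $\{x_k\}$ and $\{y_k\}$, and by the description of optimal plans recalled in the Introduction,
\[
\|f\|_{\tc}=\min\Bigl\{\sum_{i,j}w_{ij}\,d(x_i,y_j)\ :\ w_{ij}\ge 0,\ \sum_j w_{ij}=b_i,\ \sum_i w_{ij}=b_j\Bigr\}.
\]
The diagonal plan $w^{*}_{ij}=b_i\delta_{ij}$ is feasible with cost $\sum_k b_k d(x_k,y_k)$, giving ``$\le$''. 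For ``$\ge$'', let $w$ be any feasible plan. Then $w-w^{*}$ is a circulation on the bipartite graph between $\{x_i\}$ and $\{y_j\}$; encoding the nonnegative off-diagonal entries $w_{ij}$ as forward flow $x_i\to y_j$ and the diagonal deficits $b_i-w_{ii}\ge 0$ as return flow $y_i\to x_i$ produces a nonnegative circulation, which decomposes into directed cycles with positive coefficients. A directed cycle running through indices $c_1,\dots,c_r$ changes the cost, per unit of flow, by $\sum_{\ell}d(x_{c_\ell},y_{c_{\ell+1}})-\sum_{\ell}d(x_{c_\ell},y_{c_\ell})$, and since $\{x_{c_\ell}y_{c_{\ell+1}}\}_{\ell}$ is a perfect matching of $\{x_{c_\ell},y_{c_\ell}\}_{\ell}$, the hereditary observation makes this number nonnegative. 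Summing over the cycles gives $\mathrm{cost}(w)\ge\mathrm{cost}(w^{*})$, and ``if'' follows (infinite sums by continuity).

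\noindent\emph{``Only if''.} Suppose $\ell_1$ embeds isometrically into $\tc(X)$. The plan relies on the combinatorial fact: \emph{for distinct points $x_1,y_1,\dots,x_n,y_n$, every perfect matching $M$ of $K(\{x_i,y_i\}_{i=1}^n)$ is, for a suitable $\varepsilon\in\{\pm1\}^n$, a perfect matching between $L_\varepsilon:=\{x_i:\varepsilon_i=1\}\cup\{y_i:\varepsilon_i=-1\}$ and its complement.} Indeed, the union of $M$ with $M_0:=\{x_iy_i\}_{i=1}^n$ is, as a graph, a disjoint union of single edges $x_iy_i$ together with cycles that alternate between $M$- and $M_0$-edges; such cycles are even, so $M\cup M_0$ is bipartite, and colouring each $x_i$ by the class containing it and letting $\varepsilon_i$ be $+1$ or $-1$ accordingly makes every edge of $M$ join $L_\varepsilon$ to its complement. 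Suppose in addition that the isometric $\ell_1$ can be realised by molecules $u_k=m_{x_ky_k}$ with all endpoints distinct. Then for each $N$ and each $\varepsilon$, the isometry applied to the coefficients $(\varepsilon_k d(x_k,y_k))_k$ gives
\[
\Bigl\|\sum_{k\le N}\varepsilon_k(\1_{x_k}-\1_{y_k})\Bigr\|_{\tc}=\sum_{k\le N}d(x_k,y_k),
\]
and the left-hand side is the minimum weight of a perfect matching between $L_\varepsilon$ and its complement. By the combinatorial fact, every perfect matching of $K(\{x_i,y_i\}_{i\le N})$ therefore has weight at least $\sum_{k\le N}d(x_k,y_k)$, i.e.\ $\{x_ky_k\}_{k\le N}$ is a minimum weight perfect matching; this holding for all $N$, the sequence $\{x_k,y_k\}_{k\ge1}$ is as required.

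\noindent\emph{Where the work lies.} The outstanding task — the main obstacle — is to pass from an arbitrary isometric copy of $\ell_1$ in $\tc(X)$ to one whose unit-vector basis is sent to elementary molecules with pairwise distinct endpoints. The plan would be: first replace $X$ by a countable subspace carrying the basis vectors $u_k$ (their span is separable, each $u_k$ is a norm-limit of finitely supported transportation problems, and the union of the supports involved generates such a subspace), reducing to $X$ countable; then, using that for a finite metric space the unit ball of $\tc$ is the convex hull of the elementary molecules, write each $u_k$, up to a small error, as a convex combination of molecules, and, fixing a $1$-Lipschitz $\phi_k$ with $\langle\phi_k,u_j\rangle=\delta_{kj}$, discard all but the molecules of $u_k$ on which $\phi_k$ is nearly maximal; next a diagonal/stabilisation argument would select one molecule from each $u_k$ while passing to a subsequence so that the selected molecules stay almost orthogonal to the surviving functionals and the errors remain summable; finally a clean-up — using the relations $\|u_k\pm u_j\|_{\tc}=2$, and that a shared endpoint forces a betweenness relation that can be removed — arranges distinct endpoints. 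The subtle point is to keep the accumulated errors under control so that the limiting $\ell_1$ is \emph{exactly}, not just almost, isometric; handling the completion (i.e.\ infinite supports) with this precision is the crux.
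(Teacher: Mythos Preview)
The paper does not prove Theorem~\ref{T:Contell_1}: it is quoted from \cite{OO20} as an established result and then used as an input to the proof of Theorem~\ref{T:Main}. There is therefore no proof in this paper against which to compare your proposal.

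Assessing the proposal on its own merits: the ``if'' direction is complete and correct. The reduction to positive coefficients, the identification of $\|f\|_{\tc}$ with a finite transportation LP between $\{x_i\}$ and $\{y_j\}$, and the circulation/cycle decomposition combined with the hereditary matching observation form a valid argument; the key inequality for each cycle is exactly the statement that the restricted matching $\{x_{c_\ell}y_{c_\ell}\}_\ell$ is minimum-weight.

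For the ``only if'' direction, your bipartition lemma (every perfect matching $M$ of $K(\{x_i,y_i\}_{i=1}^n)$ joins some $L_\varepsilon$ to its complement, because $M\cup M_0$ is $2$-regular and hence a union of even cycles) is correct, and the deduction from it---evaluating the isometry at the sign vector $(\varepsilon_k d(x_k,y_k))_k$ and reading $M$ as a feasible plan for the resulting transportation problem---legitimately shows that $\{x_ky_k\}_{k\le N}$ is a minimum-weight perfect matching. But all of this is conditional on the unproved reduction that an isometric copy of $\ell_1$ in $\tc(X)$ can be chosen so that its unit vectors are elementary molecules $m_{x_ky_k}$ with pairwise distinct endpoints. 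You correctly flag this as the main obstacle, and it is a genuine one: a linear isometric embedding need not carry extreme points of the domain ball to extreme points of the target ball (witness $\ell_1\hookrightarrow L_1$), so some structural property specific to $\tc(X)$ must enter. Your approximation-and-selection outline is plausible in spirit but is not yet a proof; in particular, nothing in it explains how the accumulated errors are driven to zero rather than merely kept small, which is precisely what ``isometric'' (as opposed to ``almost isometric'') demands. Until that step is supplied---for which you should consult \cite{OO20} directly---the ``only if'' direction remains a programme rather than a proof.
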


The assertion  below is the main result of this paper.

\begin{theorem}\label{T:Main} If a metric space $X$ is such that $\tc(X)$
contains a subspace isometric to $\ell_1$, then $\tc(X)$ contains
a $1$-complemented isometric copy of $\ell_1$.
\end{theorem}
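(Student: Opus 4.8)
The plan is to start from the combinatorial characterization provided by Theorem~\ref{T:Contell_1}: we are given a sequence of pairs $\{x_i,y_i\}_{i=1}^\infty$ in $X$, all elements distinct, such that for every $n$ the set of edges $\{x_iy_i\}_{i=1}^n$ is a minimum weight perfect matching in the complete weighted graph $K(\{x_i,y_i\}_{i=1}^n)$. The elements $f_i:=\1_{x_i}-\1_{y_i}$ then span an isometric copy of $\ell_1$ inside $\tc(X)$; what we must produce is a norm-one projection from $\tc(X)$ onto (a possibly different, but still isometric) copy of $\ell_1$. The natural candidate for the projection is a Lipschitz-function-based averaging operator: since $\tc(X)^*$ is the space of Lipschitz functions vanishing at a base point, a norm-one projection onto $\overline{\mathrm{span}}\{f_i\}$ is built from a sequence of Lipschitz functions $\varphi_i$ on $X$, each of norm one, with $\varphi_i(x_j)-\varphi_i(y_j)=\delta_{ij}\,d(x_i,y_i)$, together with a compatibility (``bounded overlap'' / unconditionality) condition ensuring that $P(f)=\sum_i \frac{\langle\varphi_i,f\rangle}{d(x_i,y_i)}\,f_i$ has norm one as an operator.

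**Key steps.**

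\emph{Step 1 (Passing to a well-separated subsequence).} The matching condition is hereditary in a useful way but the pairs need not be ``geometrically independent'' enough; I would first extract a subsequence of the pairs so that the points are ``sufficiently spread out.'' Concretely, using that $\{x_iy_i\}_{i=1}^n$ is always a \emph{minimum} weight perfect matching, one deduces inequalities of the form $d(x_i,y_i)+d(x_j,y_j)\le d(x_i,y_j)+d(x_j,y_i)$ and $d(x_i,y_i)+d(x_j,y_j)\le d(x_i,x_j)+d(y_i,y_j)$ for all $i\ne j$. Iterating a diagonal argument, I would pass to a subsequence along which the ratios $d(x_i,y_i)/d(x_j,y_j)$ and the cross-distances behave controllably — e.g. the diameters of the pairs decay fast enough (say $\sum_{j>i} d(x_j,y_j) \ll d(x_i,y_i)$, or some rescaled version), so that pair $i$ is ``dominant'' over the tail.

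\emph{Step 2 (Constructing the separating Lipschitz functions).} For each $i$ define $\varphi_i(v):=\tfrac12\big(d(v,y_i)-d(v,x_i)\big)$ after truncation, or more robustly use the ``matching-respecting'' Lipschitz function $\varphi_i(v)=\max\{0,\ d(x_i,y_i)-d(v,y_i),\ d(v,x_i)-d(x_i,y_i)\}$-type construction (a function that equals $0$ near $y_i$, equals $d(x_i,y_i)$ near $x_i$, is $1$-Lipschitz, and is \emph{constant on the points of all the other pairs}, which is where the separation from Step 1 is used). The key properties to verify are: $\|\varphi_i\|_{\mathrm{Lip}}=1$, $\varphi_i(x_i)-\varphi_i(y_i)=d(x_i,y_i)$, and $\varphi_i(x_j)=\varphi_i(y_j)$ for $j\ne i$ — this last one makes the projection coordinates biorthogonal.

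\emph{Step 3 (Assembling the norm-one projection).} Set $P f = \sum_i d(x_i,y_i)^{-1}\,\langle \varphi_i, f\rangle\, f_i$. Biorthogonality from Step~2 gives $P f_j = f_j$, so $P$ is a projection onto $Y:=\overline{\mathrm{span}}\{f_i\}$, and since the $f_i$ are $1$-equivalent to the $\ell_1$-basis (Theorem~\ref{T:Contell_1}), $Y$ is isometric to $\ell_1$. It remains to show $\|P\|=1$, equivalently $\|Pf\|_{\tc}\le \|f\|_{\tc}$ for all $f$. For this I would dualize: $\|Pf\|_{\tc}=\sup\{\langle \psi, Pf\rangle: \|\psi\|_{\mathrm{Lip}}\le 1\}$, and for a fixed norm-one $\psi$ one has $\langle\psi,Pf\rangle=\langle \sum_i \varepsilon_i d(x_i,y_i)^{-1}|\langle\varphi_i,f\rangle|\,\psi(f_i)\cdot(\ldots)\rangle$; the point is to bound this by exhibiting, for each $f$, a single norm-one Lipschitz function $\psi$ (built as a signed combination $\sum_i \varepsilon_i \varphi_i$ with signs chosen to match those of $\langle\varphi_i,f\rangle$) with $\langle\psi,f\rangle = \|Pf\|_{\tc}$. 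That the signed sum $\sum_i \varepsilon_i \varphi_i$ is still $1$-Lipschitz — for \emph{every} choice of signs $\varepsilon_i\in\{-1,+1\}$ — is precisely the ``$\ell_\infty$-direct-sum'' / unconditionality property we engineered via the separation in Step~1 and the plateau structure in Step~2.

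**Main obstacle.**

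The hard part will be Step~1 together with the $1$-Lipschitz bound in Step~3: one must extract the subsequence cleverly enough that \emph{arbitrary} signed sums of the plateau functions $\varphi_i$ remain exactly $1$-Lipschitz (not merely boundedly Lipschitz), since we need the projection to have norm \emph{exactly} one, not just $\le C$. Getting the constant down to $1$ — rather than $1+\varepsilon$ or a universal constant — is the whole content of the theorem (a bounded projection onto $\ell_1$ is comparatively easy), and it forces the geometric estimates coming out of the minimum-weight-matching hypothesis to be used sharply: the inequalities from Step~1 must be leveraged to guarantee that moving a point into the ``support region'' of $\varphi_i$ costs at least as much distance as the potential gain, uniformly and with the right constant. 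I expect the truncation radii in Step~2 to have to be chosen as explicit functions of the separation parameters from Step~1, and verifying $1$-Lipschitzness of $\sum_i\varepsilon_i\varphi_i$ will reduce to a finite case analysis on which plateaus two given points $u,v\in X$ fall into.
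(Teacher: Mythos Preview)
Your global architecture---start from Theorem~\ref{T:Contell_1}, build biorthogonal $1$-Lipschitz functionals $\varphi_i$, and verify $\|P\|=1$ by showing $\sum_i|\varphi_i(w)-\varphi_i(z)|\le d(w,z)$ for every pair $w,z$---is exactly right and matches the paper. The gap is in Steps~1--2: the ``well-separated subsequence'' you hope to extract need not exist, and the pairwise inequalities $d(x_i,y_i)+d(x_j,y_j)\le d(x_i,x_j)+d(y_i,y_j)$ (comparison with a single transposition) are far weaker than the full minimum-weight-matching hypothesis. Already for $X=\mathbb{N}$ with $x_i=2i-1$, $y_i=2i$ every pair has length~$1$, so no decay condition of the type $\sum_{j>i}d(x_j,y_j)\ll d(x_i,y_i)$ is available; and in general metric spaces there is no reason a tent/plateau construction calibrated only to cross-distances will make \emph{every} signed sum $\sum_i\varepsilon_i\varphi_i$ exactly $1$-Lipschitz. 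You correctly flag this as the crux, but the mechanism you propose (geometric separation) is not the one that works.

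What the paper does instead is use the \emph{full} LP dual of the minimum-weight perfect matching problem (Edmonds): for each finite $n$ there is an optimal dual solution $\{\sy_C\}$ supported on odd cuts coming from a \emph{nested} family $\mathcal H$ of subsets of $\{x_i,y_i\}_{i=1}^n$, with $\sum_{C\ni e}\sy_C\le d(e)$ for every edge $e$ and equality on matching edges. The functionals $t_{i,n}$ are then built as layered ``ring'' functions $s_{\lambda,\theta,H}$ indexed by $H\in\mathcal H$, and the estimate $\sum_i|t_{i,n}(w)-t_{i,n}(z)|\le d(w,z)$ falls out of the dual feasibility constraint, not from any separation of the pairs. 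Crucially the $t_{i,n}$ depend on $n$ (the nested family changes as $n$ grows), so the paper first proves a finite-$n$ lemma and only then passes to weak$^*$ limits along a free ultrafilter to obtain the $\varphi_i$. Replacing your Steps~1--2 by this LP-duality construction (and inserting the finite-stage $+$ ultralimit structure) is what is needed to close the argument.
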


\begin{remark}\label{rem1} In general,  a linear isometric copy
of $\ell_1$ in $\tc(X)$ does not have to be complemented. This
fact is a consequence of the following result proved in
\cite[Theorem 3.1]{GK03}: There exists a metric space $X_C$ such
that $\tc(X_C)$ contains a linear isometric copy of $C[0,1]$.  To
show that $\tc(X_C)$ contains a linearly isometric copy of
$\ell_1$ which is not complemented, it suffices to combine this
result with  the two classical facts: (i) $\ell_1$ admits a linear
isometric embedding into $C[0,1]$ (Banach-Mazur, see \cite[Theorem
9, p.~185]{Ban32}), (ii) The image of this subspace is not
complemented, for example, because the dual of $C[0,1]$ is weakly
sequentially complete, but the dual of $\ell_1$ is not; see
\cite[Chapter IV]{DS58}.
\end{remark}

Remark \ref{rem1}, which establishes the existence of
non-complemented linear isometric copies of $\ell_1$ in  $\tc(X)$,
is based on important classical results. However, if we are
interested in subspaces isometric to $\ell_1$ which are only
not
$1$-complemented, such example can be constructed in a more
elementary way. We present such an example below.

\begin{example} There is a simple metric space $X_K$ such that $\tc(X_K)$
contains a linear isometric copy of $\ell_1$ which is not
$1$-complemented.
\end{example}

Recall that $K_{4,4}$ is a complete bipartite graph in which both
parts have $4$ vertices. The starting point of this example is the
fact \cite[Section 8]{DKO21} that $\tc(K_{4,4})$  contains a
linearly isometric copy of $\ell_\infty^4$. It is a well-known
observation of Gr\"unbaum \cite{Gru60}, that there exist a
subspace in $\ell_\infty^4$ which is isometric to $\ell_1^3$ and
is not $1$-complemented. We let $X_K$ be the union of the vertex
set $V(K_{4,4})$ and $\mathbb{N}$ (with their usual metrics).
Next, pick a vertex $O$ in $V(K_{4,4})$ and introduce the metric
on the union as follows: the distance between two points in
$V(K_{4,4})$ or $\mathbb{N}$ is equal to the original. The
distance between $v\in V(K_{4,4})$ and $m\in\mathbb{N}$ is equal
to $d(v,O)+m$. It is well-known  (see \cite[Section 3.1]{AFGZ21})
that for such metric one has
$\tc(X_K)=\tc(K_{4,4})\oplus_1\ell_1$. By the aforementioned
example of \cite{DKO21}, this space contains a subspace isometric
$\ell_\infty^4\oplus_1\ell_1$. Thus, by the observation of
\cite{Gru60} stated above, the space contains a linear isometric
copy of $\ell_1$ which is not $1$-complemented.

\section{Proof of Theorem \ref{T:Main}}

Our proof of Theorem \ref{T:Main} is based on Theorem
\ref{T:Contell_1}. The following result is the key lemma in the
proof of Theorem \ref{T:Main}.

\begin{lemma}\label{L:FiniteMatch} Let $(X,d)$ be a metric space containing a set
$\{x_i,y_i\}_{i=1}^n$ of pairs forming a minimum-weight matching
in $K(\{x_1,\dots,x_n,y_1,\dots,y_n\})$. Then, there exists a
surjective norm-$1$  projection $P_n:\tc(X)\to L_n$, where $L_n$
is the subspace of $\tc(X)$ spanned by
$\{\1_{x_i}-\1_{y_i}\}_{i=1}^n$.
\end{lemma}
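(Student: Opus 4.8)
The plan is to construct the projection $P_n$ explicitly and then verify it has norm $1$. The natural candidate uses the dual description of $\tc(X)$: its dual is the space of Lipschitz functions on $X$ (vanishing at a basepoint). For each pair $\{x_i,y_i\}$ I would like a Lipschitz function $\varphi_i$ of Lipschitz constant $1$ that "reads off" the $i$-th coordinate, i.e. $\varphi_i(x_i)-\varphi_i(y_i)=d(x_i,y_i)$ and, crucially, $\varphi_i(x_j)-\varphi_i(y_j)=0$ for all $j\ne i$. Given such functionals, define
\begin{equation*}
P_n(f)=\sum_{i=1}^n \frac{\langle \varphi_i,f\rangle}{d(x_i,y_i)}\,(\1_{x_i}-\1_{y_i}).
\end{equation*}
On the generators one checks $\langle\varphi_i,\1_{x_j}-\1_{y_j}\rangle=\varphi_i(x_j)-\varphi_i(y_j)=\delta_{ij}\,d(x_i,y_i)$, so $P_n$ fixes $L_n$ pointwise and is idempotent with range exactly $L_n$; surjectivity is then immediate.

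The first real task is producing the functionals $\varphi_i$. Here I would invoke the minimum-weight matching hypothesis. The standard LP-duality / complementary-slackness picture for minimum-weight perfect matching on $K(\{x_1,\dots,x_n,y_1,\dots,y_n\})$ provides a "potential" function: vertex weights $p:V\to\mathbb{R}$ with $p(u)-p(v)\le d(u,v)$ for all vertices $u,v$ and $p(x_i)-p(y_i)=d(x_i,y_i)$ for the matched pairs (these are exactly the dual feasibility and tightness-on-matching-edges conditions, which hold because the matching is optimal). Such a $p$ is a $1$-Lipschitz function on the $2n$-point set with $p(x_i)-p(y_i)=d(x_i,y_i)$, but this single function handles all coordinates at once only in the $\ell_\infty$-sense, not the $\ell_1$-sense we need. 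To separate the $i$-th coordinate I would instead run the matching-duality argument on a perturbed weight system — or, more transparently, note that for each fixed $i$ the edge set $\{x_iy_i\}\cup\{x_jy_j: j\ne i\}$ is still a minimum-weight matching, and build a potential that is tight only on edge $x_iy_i$ while being "flat" (equal on $x_j$ and $y_j$) on the others; concretely one can take $\varphi_i(v)=\dist(v,\{x_i\}\cup A)-\dist(v,\{y_i\}\cup B)$ type formulas, or more cleanly define $\varphi_i$ as a McShane-type $1$-Lipschitz extension of the function that is $+\tfrac12 d(x_i,y_i)$ at $x_i$, $-\tfrac12 d(x_i,y_i)$ at $y_i$, and $0$ at all other $x_j,y_j$ — such an extension exists iff the prescribed values are $1$-Lipschitz on the finite set, which is precisely guaranteed by the minimum-weight matching condition via LP duality. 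Extending from the $2n$-point set to all of $X$ by McShane's theorem costs nothing in the Lipschitz constant.

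Granting the $\varphi_i$ with $\lip(\varphi_i)\le 1$, the norm estimate $\|P_n\|\le 1$ follows from duality: for $f\in\tc(X)$,
\begin{equation*}
\|P_nf\|_{\tc}=\Big\|\sum_i \tfrac{\varphi_i(x_i)-\varphi_i(y_i)}{d(x_i,y_i)}\cdot\tfrac{\langle\varphi_i,f\rangle}{\varphi_i(x_i)-\varphi_i(y_i)}\,(\1_{x_i}-\1_{y_i})\Big\|
\end{equation*}
and one uses that $\|\sum_i a_i(\1_{x_i}-\1_{y_i})\|_{\tc}=\sum_i |a_i|\,d(x_i,y_i)$ (this equality, not just $\le$, is exactly the content of the matching being minimum-weight — it says the $\1_{x_i}-\1_{y_i}$ span an isometric $\ell_1^n$ inside $\tc(X)$, which is essentially Theorem~\ref{T:Contell_1} at finite level). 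Combined with $|\langle\varphi_i,f\rangle|\le \|f\|_{\tc}$ one would like to conclude, but this only gives $\|P_nf\|\le n\|f\|$; to get $\|P_nf\|\le\|f\|$ I must pick, for each $f$, a \emph{single} signed combination of the $\varphi_i$'s that simultaneously witnesses the $\ell_1^n$-norm of $P_nf$ and is still globally $1$-Lipschitz. That is: the functions $\{\varepsilon_1\varphi_1+\dots+\varepsilon_n\varphi_n:\varepsilon_i=\pm1\}$ should all have Lipschitz constant $\le 1$ (equivalently $\le 1$ on the $2n$-point set). Verifying this uniform-Lipschitz property of all $2^n$ sign-combinations is the technical heart of the argument, and it is where the minimum-weight matching hypothesis must be used in its full strength — this is the step I expect to be the main obstacle, and I would attack it by choosing the $\varphi_i$ as suitable $\dist$-functions adapted to the optimal dual potential, so that sign changes correspond to swapping roles of $x_i$ and $y_i$, an operation under which the matching remains optimal and the Lipschitz bound is preserved. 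Once all sign-combinations are $1$-Lipschitz, choosing the optimal signs for a given $f$ yields $\|P_nf\|_{\tc}=\big|\big\langle \sum_i\varepsilon_i\varphi_i,\,f\big\rangle\big|\le\|f\|_{\tc}$, which finishes the proof since $P_n$ restricted to $L_n$ is the identity forces $\|P_n\|\ge 1$.
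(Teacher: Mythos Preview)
Your framework is correct: one needs biorthogonal $1$-Lipschitz functionals $\varphi_i$ together with the stronger property that every sign combination $\sum_i\varepsilon_i\varphi_i$ is $1$-Lipschitz (equivalently, $\sum_i|\varphi_i(w)-\varphi_i(z)|\le d(w,z)$ for all $w,z$). This is exactly what the paper proves. However, both of your concrete proposals for producing the $\varphi_i$ fail.

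First, the McShane extension of the symmetric data $\big(\varphi_i(x_i),\varphi_i(y_i)\big)=\big(\tfrac12 d(x_i,y_i),-\tfrac12 d(x_i,y_i)\big)$, $\varphi_i\equiv 0$ on the other vertices, need \emph{not} be $1$-Lipschitz on the $2n$-point set. A four-point counterexample: $d(x_1,y_1)=10$, $d(x_2,y_2)=1$, $d(x_1,x_2)=3$, $d(x_1,y_2)=4$, $d(y_1,x_2)=7$, $d(y_1,y_2)=8$. All three perfect matchings have weight $11$, so $\{x_1y_1,x_2y_2\}$ is minimum-weight; but $|\varphi_1(x_1)-\varphi_1(x_2)|=5>3=d(x_1,x_2)$. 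The deeper issue is that you are invoking the dual of the \emph{bipartite} matching LP (vertex potentials $p$ with $p(u)-p(v)\le d(u,v)$). On the complete graph $K_{2n}$ the perfect-matching polytope requires Edmonds' odd-cut inequalities, and the correct LP dual assigns nonnegative weights $\sy_C$ to odd cuts, not merely to vertices. The paper uses precisely this: an optimal dual solution supported on a nested family $\mh$ of odd sets, and each $t_{i,n}$ is built by layering distance-type functions $s_{0,\pm1,H}$ along the chain of sets in $\mh$ separating $x_i$ from $y_i$. Nothing simpler than this odd-set structure will work in general.

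Second, even if you had constructed individual biorthogonal $1$-Lipschitz $\varphi_i$ (which do exist, just not via your formula), the assertion that all $2^n$ sign combinations remain $1$-Lipschitz is not a consequence of biorthogonality plus minimum-weight matching; it is an additional constraint on the \emph{joint} choice of the $\varphi_i$. Your sketch (``swapping roles of $x_i$ and $y_i$ keeps the matching optimal'') is suggestive but does not pin down functions with the required additive compatibility. In the paper this step is the real content: the decomposition $t_{i,n}=\sum_H s_{0,\pm1,H}$ over disjoint collections of sets $H\in\mh$ lets one, for any $w,z\in X$, reassemble the relevant pieces into a single $1$-Lipschitz function $t_{D,F}$ witnessing $\sum_i|t_{i,n}(w)-t_{i,n}(z)|\le d(w,z)$. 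Without the nested odd-set machinery there is no mechanism to obtain this.
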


\begin{proof} Denote the vector $\frac{\1_{x_i}-\1_{y_i}}{d(x_i,y_i)}$ by
$\fs_i$. To prove this lemma,  we are going to construct a
sequence $\{t_{i,n}\}_{i=1}^n$ of $1$-Lipschitz functions on $X$
such that $\{\fs_i,t_{i,n}\}_{i=1}^n$ is a biorthogonal set, and
also
\begin{equation}\label{E:P_n}
P_n(f):=\sum_{i=1}^nt_{i,n}(f)\fs_i
\end{equation}
is a surjective norm-$1$ projection $P_n:\tc(X)\to L_n$.

\begin{remark}\label{R:NoZeroAtO} The dual of the space $\tc(X)$ is identified as $\lip_0(X)$ - the
space of Lipschitz functions on $X$ having value $0$ at a picked
and fixed point $O$ in $X$, called the {\it base point} (see
\cite[Chapter 10]{Ost13}). Nevertheless, any Lipschitz function
$t$ on $X$ gives rise to a continuous linear functional on
$\tc(X)$, the functional is the same as the functional produced by
$t-t(O)\in\lip_0(X)$. Because of this, in the selection of
$t_{i,n}$ the condition $t_{i,n}(O)=0$ may be dropped out.
\end{remark}

At this point, we notice that, after establishing the
biorthogonality, it suffices to prove that $\|P_n(f)\|_\tc\le
\|f\|_\tc$ for every $f\in\tc(X)$ of the form $f=\1_w-\1_z$ for
$w,z\in X$. This will be shown by using the reasoning of
\cite[p.~196]{KMO20}. For the convenience of the reader, the
details are presented below. Indeed, by the definition of
$\tc(X)$, the space $\tp(X)$ is dense in $\tc(X)$, and for $g \in
\tp(X)$ the desired inequality can be derived from the case
$f=\1_w-\1_z$ as follows. For each $g\in\tp(X)$, there exists an
optimal transportation plan (see Section 1 or \cite[Proposition
3.16]{Wea18}). Hence, $g$ can be represented as a sum
$g=\sum_{i=1}^m g_i$, where all $g_i$ are of the form
$g_i=b_i(\1_{w_i}-\1_{z_i})$, $b_i\in\mathbb{R}$, and
$\|g\|_\tc=\sum_{i=1}^m\|g_i\|_\tc$. Therefore, assuming that we
proved the inequality $\|P_n(f)\|_\tc\le \|f\|_\tc$ in the case
$f=\1_w-\1_z$,  we obtain:
\[\|P_ng\|_\tc=\left\|P_n\left(\sum_{i=1}^m g_i\right)\right\|_\tc\le\sum_{i=1}^m\|P_ng_i\|_\tc\le \sum_{i=1}^m\|g_i\|_\tc=\|g\|_\tc,
\]
and, thus, $\|P_n\|\le 1$.
\medskip

To construct $\{t_{i,n}\}_{i=1}^n$, we need to restate the
assumption that $\{x_iy_i\}_{i=1}^n$ is a minimum weight matching
in $K(\{x_1,\dots,x_n,y_1,\dots,y_n\})$ in Linear Programming (LP)
terms. Originally, this approach was suggested by Edmonds
\cite{Edm65}. Below, we follow the presentation of this approach
given in \cite[Sections 7.3 and 9.2]{LP09}. First, consider the
minimum weight perfect matching problem on a complete weighted
graph $G$ with even number of vertices and weight $\sw:E(G)\to
\mathbb{R}$, $\sw\ge 0$. By \cite[Theorem 7.3.4]{LP09}, the
minimum weight perfect matching problem can be reduced to the
linear program {\bf (LP1)} described below. Within the program, an
{\it odd cut}  designates the set of edges in $G$  joining a
subset of $V(G)$ of odd cardinality with its complement,  while a
{\it trivial odd cut} designates  a set of edges joining one
vertex with its complement. If $\sx$ is a real-valued function on
$E(G)$ and $A$ is a set of edges, we define $\sx(A):=\sum_{e\in
A}\sx(e)$. The reduction means that the linear program has an
integer solution corresponding to a minimum weight perfect
matching.
\medskip

Here comes the program.

\begin{itemize}

\item {\bf (LP1)} minimize $\sw^\top \cdot \sx$  (where
$\sx:E(G)\to \mathbb{R}$)
\medskip

\item {\color{black} subject to}

\begin{enumerate}[{\bf (1)}]

\item $\sx(e)\ge 0$ for each $e\in E(G)$

\item $\sx(C)=1$ for each trivial odd cut $C$

\item\label{I:3} $\sx(C)\ge 1$ for each non-trivial odd cut $C$.

\end{enumerate}
\end{itemize}

Next, we  introduce a variable $\sy_C$ for each odd cut $C$.
\medskip

The dual program of the program {\bf (LP1)} is:

\begin{itemize}

\item {\bf (LP2)} maximize $\sum_C \sy_C$

\item subject to

\begin{itemize}

\item[{\bf (D1)}] $\sy_C\ge 0$ for each non-trivial odd cut $C$

\item[{\bf (D2)}] $\sum_{C~ {\rm containing~ }e} \sy_C\le \sw(e)$
for every $e\in E(G)$.

\end{itemize}
\end{itemize}

The Duality in Linear Programming \cite[Section 7.4]{Sch86} - see
also a summary in \cite[Chapter 7]{LP09} -  states that the optima
{\bf (LP1)} and {\bf (LP2)} are equal. Therefore, the total weight
of the minimum weight perfect matching coincides with the sum of
entries of the optimal solution of the dual problem.
\medskip

In order to proceed, it is beneficial to recall some of the
properties of optimal solutions $\{\sy_C\}$. Let $M$ be a minimum
weight perfect matching in $G$. We start with the following
observation:
\begin{equation}\label{E:ForOptDual} \sw(M)=\sum_{e\in M} \sw(e)\substack{{\bf (D2)}\\\ge} \sum_{e\in
M}~~\sum_{C~ {\rm containing~ }e} \sy_C=\sum_{C}|M\cap
C|\sy_C\substack{{\bf (3)}\\\ge}\sum_C \sy_C,\end{equation} where
we use the fact that $|M\cap C|\ge 1$ for each perfect matching
$M$ and each odd cut $C$. See \cite[p.~371]{LP09}.

If $\sy_C$ is an optimal dual solution, then the leftmost and the
rightmost sides in \eqref{E:ForOptDual} coincide, implying
\begin{equation}\label{E:WeighAtt} \sw(e)=\sum_{C~ {\rm
containing~ }e}\sy_C\end{equation} for each $e\in M$ and
\begin{equation}\label{E:Int1} |M\cap C|=1 \hbox{ for each non-trivial odd cut }C
\hbox{ satisfying }\sy_C>0.\end{equation}

Analysis in \cite[p.~372--374]{LP09} shows that we may assume that
there exists a family $\mh$ of subsets of $V(G)$ which satisfies
the conditions:

\begin{itemize}

\item[(P-1)] $\mh$ is \emph{nested} in the sense that for any
$D,T\in \mh$ either $D\subseteq T$ or $T\subseteq D$ or $D\cap
T=\emptyset$.

\item[(P-2)] $\mh$ contains all singletons of $V(G)$.

\item[(P-3)] if $C$ is a non-trivial odd cut, then $y_C>0$ if and
only if $C=\partial D$ for some $D\in\mh$, where $\partial D$ is
the set of edges connecting $D$ and $V(G)\backslash D$.

\end{itemize}

Furthermore, \cite[Theorem 5.20]{CCPS98} and \cite[Lemma
14.11]{KMO20} established that if the weight function $\sw$
satisfies $\sw(uv)=d(u,v)$ for some metric $d$ on $V(G)$ and all
$u,v\in V(G)$, then there is an optimal dual solution satisfying
also $\sy_C\ge 0$ for all odd cuts, including trivial ones.
\medskip

These results  will be applied to the weighted graph
$G=K(\{x_1,\dots,x_n,y_1,\dots,y_n\})$ and the matching
$\{x_iy_i\}_{i=1}^n$. We denote the matching $\{x_iy_i\}_{i=1}^n$
by $M_n$, the graph $K(\{x_1,\dots,x_n,y_1,\dots,y_n\})$ by
$K(M_n)$, and its vertex set by $V_n$.
\medskip

Keeping the notation $\mh$ for the obtained nested family of
subsets of $V_n$ satisfying (P-1)--(P-3), we may and shall assume
that all elements in $\mh$ have cardinalities at most $n$, due to
the fact that each edge boundary of a set is a boundary of a set
having such cardinality, and that $\mh$ contains at most one set
of cardinality $n$ (see condition (P-1)). With these assumptions
the correspondence between the edge boundaries of sets in $\mh$
and the cuts $C$ which are either trivial or satisfy $\sy_C>0$ is
bijective.
\medskip

With this in mind, it is only a slight abuse of notation to denote
the weight of $\partial D$ by $\sy_D$, in particular,
$\sy_{\{v\}}$ for a vertex $v$ denotes the weight of the trivial
cut separating vertex $v$ from the rest of $V_n$ (in $K(M_n)$).
\medskip

Our next goal is to construct $1$-Lipschitz functions
$\{t_{i,n}\}_{i=1}^n$ satisfying the conditions
$t_{i,n}(y_i)-t_{i,n}(x_i)=d(x_i,y_i)$ and
$t_{i,n}(y_j)-t_{i,n}(x_j)=0$, $i,j\in\{1,\dots, n\}$, $j\ne i$.
Some features of this construction will be used to prove the
inequality $\|P_n(f)\|_\tc\le\|f\|_\tc$.\medskip

Using the notation $B_X(v,\fr)=\{x\in X:~ d(x,v)\le \fr\}$ for
$\fr>0$, we define, for each $F\in\mh$,  the set
\[U_F=\bigcup_{v\in F}B_X\left(v,\sum_{D\subseteq F,~v\in D\in \mh}\sy_D\right).\]
Note that for a $1$-element set $F=\{v\}$, $v\in V_n$, one has
$U_F=B_X(v,\sy_F)$.
\medskip

As a next step, we introduce three collections of $1$-Lipschitz
functions: $r_{\lambda,\theta,H}:X\to\mathbb{R}$ and
$s_{\lambda,\theta,H}:X\to\mathbb{R}$ parameterized by
$\lambda\in\mathbb{R}$, $\theta=\pm 1$, and $H\in\mh$, and the
collection $t_{D,F}$ parameterized by $D,F\in\mh$. Here is the
definition of $r_{\lambda,\theta,H}$:

\begin{equation}\label{E:Funct_r}
r_{\lambda,\theta,H}(x)=\lambda+\theta\min_{v\in
H}\{\max\{(d(x,v)-\sum_{v\in D\subsetneq H,~ D\in\mh}\sy_D),0\}\}.
\end{equation}

In the case where $H=\{v\}$, we understand this formula as

\[
r_{\lambda,\theta,\{v\}}(x)=\lambda+\theta d(x,v).\]

Note that $r_{\lambda,\theta,H}$ is equal to $\lambda$ on
$\bigcup_{D\subsetneq H,~D\in\mh}U_D$. The function
$r_{\lambda,\theta,H}$ is $1$-Lipschitz because $d(x,v)$ is
$1$-Lipschitz and all operations which we apply to it, namely,
maximum, minimum, multiplication with $\pm 1$ and addition of a
constant preserve this property.
\medskip

Now we define $1$-Lipschitz functions
$s_{\lambda,\theta,H}:X\to\mathbb{R}$ parameterized by
$\lambda\in\mathbb{R}$, $\theta=\pm 1$, and $H\in\mh$:
\begin{equation}\label{E:Funct_s}
s_{\lambda,\theta,H}(x)=\lambda+\theta(\min\{\min_{v\in
H}\{\max\{(d(x,v)-\sum_{v\in D\subsetneq H,~
D\in\mh}\sy_D),0\}\},\sy_H\}).
\end{equation}

In the case where $H=\{v\}$, the definition means the following:
\[
s_{\lambda,\theta,\{v\}}(x)=\lambda+\theta\min\{d(x,v),\sy_{\{v\}}\}.\]

Note that $s_{\lambda,\theta,H}$ is equal to $\lambda$ on
$\bigcup_{D\subsetneq H,~D\in\mh}U_D$ and to $\lambda+\theta
\sy_H$ outside $U_H$. The function $s_{\lambda,\theta,H}$ is
$1$-Lipschitz for the same reason as $r_{\lambda,\theta,H}$.

Now we start constructing functions $t_{D,F}$ for different
subsets $D,F \in\mh$. As an initial point, let $D=D_1=\{x_i\},
F=F_1=\{y_i\}$, and $x_iy_i$ be an edge in the matching $M_n$. In
this case, we denote $t_{D,F}$ also $t_{i,n}$ because these
$1$-Lipschitz functions will be the desired biorthogonal
functionals for $\{\fs_i\}_{i=1}^n$.

Let $D_1=\{x_i\}\subsetneq D_2\subsetneq
D_3\subsetneq\dots\subsetneq D_{\tau_i}$ be elements of $\mh$,
where $D_{\tau_i}$ is the largest set in $\mh$ containing $x_i$
but not containing $y_i$. Assume also that this increasing
sequence is maximal in the sense that there is no $J\in\mh$
satisfying $D_k\subsetneq J\subsetneq D_{k+1}$.

Similarly, let $F_1=\{y_i\}\subsetneq F_2\subsetneq
F_3\subsetneq\dots\subsetneq F_{\sigma_i}$ be a maximal increasing
sequence of sets in $\mh$ with $x_i\notin F_{\sigma_i}$.

  We define:
\begin{equation}\label{E:In_r}
t_{i,n}(x)=\begin{cases} l_{i,n}(x) &\hbox{ if } l_{i,n}(x)<
\sy_{D_1}+\dots+\sy_{D_{\tau_i}}\\
h_{i,n}(x) &\hbox{ if } h_{i,n}(x)>
\sy_{D_1}+\dots+\sy_{D_{\tau_i}}\\
\sy_{D_1}+\dots+\sy_{D_{\tau_i}} &\hbox{ otherwise } \end{cases}
\end{equation}
where
\begin{equation}\begin{split}
l_{i,n}(x)=\min\{r_{0,1,D_1}(x),& r_{\sy_{D_1},1,D_2}(x),
r_{\sy_{D_1}+\sy_{D_2},1,D_3}(x), \dots,\\&\qquad
r_{\sy_{D_1}+\dots+\sy_{D_{\tau_i-1}},1,D_{\tau_i}}(x),
\}\end{split}
\end{equation}
and
\begin{equation}\begin{split}
h_{i,n}(x)=&\max\{r_{\sy_{D_1}+\dots+\sy_{D_{\tau_i}}+\sy_{F_{\sigma_i}},-1,F_{\sigma_i}}(x),\dots,
\\&~ r_{\sy_{D_1}+\dots+\sy_{D_{\tau_i}}+\sy_{F_{\sigma_i}}+\dots+\sy_{F_2},-1,F_2}(x),
r_{\sy_{D_1}+\dots+\sy_{D_{\tau_i}}+\sy_{F_{\sigma_i}}+\dots+\sy_{F_1},-1,F_1}(x)
\}.\end{split}
\end{equation}

It is not obvious that $t_{i,n}$ is well-defined, but it follows
from the presented below proof that $t_{i,n}$ is $1$-Lipschitz.
\medskip

The functions $l_{i,n}$ and $h_{i,n}$, are $1$-Lipschitz because
they have been obtained from $1$-Lipschitz functions using the
maximum and minimum operations. As for $t_{i,n}$, it suffices to
verify the $1$-Lipschitz condition for $x$ and $y$ satisfying
$l_{i,n}(x)< \sy_{D_1}+\dots+\sy_{D_{\tau_i}}$ and $h_{i,n}(y)>
\sy_{D_1}+\dots+\sy_{D_{\tau_i}}$.

Since all minima in the definitions above are over finite sets, we
may, without loss of generality, assume that that there exist
$k\in\{0,\dots,\tau_i\}$ and  $u\in D_{k+1}$ such that

\begin{equation}\label{E:lin-choice}\begin{split} l_{i,n}(x)&=r_{\sy_{D_1}+\dots+\sy_{D_k},1,D_{k+1}}(x)\\&=\sy_{D_1}+\dots+\sy_{D_k}+\max\{(d(x,u)-\sum_{u\in
D\subsetneq D_{k+1},~ D\in\mh}\sy_D),0\}.\end{split}\end{equation}

Similarly, without loss of generality, we may assume that there
exist $m\in\{0,\dots,\sigma_i\}$ and $w\in F_{m+1}$ such that
\begin{equation}\label{E:hin-choice}\begin{split}
h_{i,n}(y)&=r_{\sy_{D_1}+\dots+\sy_{D_{\tau_i}}+\sy_{F_{\sigma_i}}+\dots+\sy_{F_{m+1}},-1,F_{m+1}}(x)
\\&=\sy_{D_1}+\dots+\sy_{D_{\tau_i}}+\sy_{F_{\sigma_i}}+\dots+\sy_{F_{m+1}}
\\&-\max\{(d(y,w)-\sum_{w\in F\subsetneq F_{m+1},~
F\in\mh}\sy_F),0\}.\end{split}\end{equation}

If both maxima in \eqref{E:lin-choice} and \eqref{E:hin-choice}
are  achieved at the first term, we get:
\[\begin{split} h_{i,n}(y)-l_{i,n}(x)&=\sy_{D_{k+1}}+\dots+\sy_{D_{\tau_i}}+\sy_{F_{\sigma_i}}+\dots+\sy_{F_{m+1}}\\&+\sum_{w\in F\subsetneq
F_{m+1},~F\in\mh}\sy_F+\sum_{u\in D\subsetneq
D_{k+1},~D\in\mh}\sy_D -d(u,x)-d(w,y)\\&\le
d(u,w)-d(u,x)-d(w,y)\le d(y,x), \end{split}\] where the first
inequality in the last row uses inequality {\bf (D2)} for the edge
joining $u$ and $w$.

If the maxima are equal to $0$, we obtain:
\begin{equation}\label{E:Both0} h_{i,n}(y)-l_{i,n}(x)=\sy_{D_{k+1}}+\dots+\sy_{D_{\tau_i}}+\sy_{F_{\sigma_i}}+\dots+\sy_{F_{m+1}}\le
d(y,x), \end{equation} where the ultimate inequality follows from
the fact that \[d(x,u)-\sum_{u\in D\subsetneq D_{k+1},~
D\in\mh}\sy_D<0\] implies that $x$ is inside $U_D$ for some proper
subset $D\subset D_{k+1}, D\in\mh$. Similarly, \[d(y,w)-\sum_{w\in
F\subsetneq F_{m+1},~ F\in\mh}\sy_F< 0\] implies that $y$ is inside
$U_F$ for some proper subset $F\subset F_{m+1}, F\in\mh$. This
implies the inequality \eqref{E:Both0}. ``Mixed'' cases can be
treated in a ``mixed'' way.

Equation \eqref{E:In_r} allows to evaluate
\[t_{i,n}(y_i)-t_{i,n}(x_i)=\sy_{D_1}+\dots+\sy_{D_{\tau_i}}+\sy_{F_{\sigma_i}}+\dots+\sy_{F_1}.\]
On the other hand, \eqref{E:WeighAtt} implies that this sum is
equal to $d(x_i,y_i)$.
\medskip

For the sequel,  representation of functions $t_{i,n}$ as sums of functions $s_{\lambda,\theta,H}$
are needed.  Our next goal is to prove the following identity:
\begin{equation}\label{E:In_s}\begin{split}
t_{i,n}(x)&= s_{0,1,D_1}(x)+s_{0,1,D_2}(x)+ s_{0,1,D_3}(x)+\dots+
s_{0,1,D_{\tau_i}}(x)\\&+
s_{\sy_{F_{\sigma_i}},-1,F_{\sigma_i}}(x)+\dots +
s_{\sy_{F_2},-1,F_2}(x)+s_{\sy_{F_1},-1,F_1}(x).
\end{split}
\end{equation}

Proof consists in checking that formulas \eqref{E:In_r} and
\eqref{E:In_s} lead to the same values on different pieces of the
space:

\begin{itemize}

\item $d(x,x_i)$ on $B_X(x_i,\sy_{D_1})$

\item $\min\{r_{0,1,D_1}(x), r_{\sy_{D_1},1,D_2}(x)\}$ on
$U_{D_2}$

\item $\min\{r_{0,1,D_1}(x), r_{\sy_{D_1},1,D_2}(x),
r_{\sy_{D_1}+\sy_{D_2},1,D_2}(x)\}$ on $U_{D_3}$,

\item and so on.

\item Also, both formulas lead to the value
$\sy_{D_1}+\dots+\sy_{D_{\tau_i}}$ outside the union of all sets
of the form $U_F$, $F\in\mh$.
\end{itemize}

To finalize  the proof of biorthogonality of $\{t_{i,n},
\fs_i\}_{i=1}^n$ (see the paragraph preceding \eqref{E:P_n}), let
$j\ne i$, $j\in\{1,\dots,n\}$. By  condition \eqref{E:Int1},
$x_j\in D_k$ implies $y_j\in D_k$, the same for $F_k$.
Consequently, either there is a unique $k$ such that $x_j,y_j\in
D_k$, but not in $D_{k-1}$, or there is a similar statement for
$F_k$. Then $U_{\{x_j\}}\ne U_{\{y_j\}}$ and both of them are
contained in $U_{D_k}$. Therefore, by the definition of
$s_{\lambda,\theta,H}$, the equality
$s_{\lambda,\theta,H}(x_j)=s_{\lambda,\theta,H}(y_j)$ holds for
all $\lambda\in\mathbb{R}$, $\theta=\pm 1$, and $H\in\{D_1,\dots,
D_{\tau_i}, F_1,\dots, F_{\sigma_i}\}$. Hence
$t_{i,n}(x_j)-t_{i,n}(y_j)=0$, this completes  our proof of
biorthogonality of   $\{t_{i,n}, \fs_i\}_{i=1}^n$.\medskip

Now we shall make necessary preparations for the proof of
$\|P_n(f)\|_\tc\le \|f\|_\tc$ for every $f\in\tc(X)$ of the form
$f=\1_w-\1_z$ for $w,z\in X$.

We start by picking one of the smallest $D\in\mh$ satisfying $w\in
U_D$ (ties may be resolved arbitrarily) and one of the smallest
$F\in\mh$ satisfying $z\in U_F$. Note that it is possible that $w$
or $z$ are not in $U_D$ for any $D\in\mh$. Without loss of
generality, the possible options for $D$ and $F$ can be listed as:

\begin{enumerate}[{\bf (a)}]

\item\label{I:a} $D\cap F=\emptyset$

\item\label{I:b} $D\subsetneq F$

\item \label{I:c} $D=F$

\item\label{I:d}  $F$ is undefined; meaning that $z$ is not
contained in $U_H$ for any $H\in\mh$.

\item\label{I:e} $D$ and $F$ are undefined,  that is, $w$ and $z$
are not contained in $U_H$ for any $H\in\mh$.

\end{enumerate}

At this stage, it has to be proven that

\begin{equation}\sum_{i=1}^n |t_{i,n}(z)-t_{i,n}(w)|\le d(z,w).
\end{equation}

To do this, in Case \eqref{I:a} we use the following argument. Let
$D_{\beta(z,w)}\supseteq D$ be the largest set in $\mh$ satisfying
$z\notin U_{D_{\beta(z,w)}}$. Likewise, let
$F_{\zeta(w,z)}\supseteq F$ be the largest set satisfying $w\notin
U_{F_{\zeta(w,z)}}$. Next, also, let $D=D_1\subsetneq
D_2\subsetneq\dots\subsetneq D_{\beta(z,w)}$ and $F=F_1\subsetneq
F_2\subsetneq\dots\subsetneq F_m$ be the maximal chains of subsets
in $\mh$.\medskip

The corresponding functions $t_{D,F}$ are constructed in the same
manner as $t_{i,n}$:

\begin{equation}\label{E:t_D,H}\begin{split} t_{D,F}(x)&=
s_{0,1,D_1}(x)+s_{0,1,D_2}(x)+ s_{0,1,D_3}(x)+\dots+
s_{0,1,D_{\beta(z,w)}}(x)\\&+
s_{\sy_{F_{\zeta(w,z)}},-1,F_{\zeta(w,z)}}(x)+\dots +
s_{\sy_{F_2},-1,F_2}(x)+s_{\sy_{F_1},-1,F_1}(x).
\end{split}
\end{equation}

The fact that $t_{D,F}$ is $1$-Lipschitz can be checked in the
same way as for $t_{i,n}$.

Note that each of the summands in the right-hand side of
\eqref{E:t_D,H} appears  in exactly one of the sums in
\eqref{E:In_s}. It cannot be present in several because the vertex
leading to oddness of the cut related  to a set $H\in\mh$ should
be in the corresponding pair $\{x_i,y_i\}$.

Therefore, on one hand,

\begin{equation}\label{E:tDF_as_tin}
t_{D,F}(x)=\sum_{i=1}^nt_{D,F,i}(x),\end{equation} where
$t_{D,F,i}(x)$ is the sum of those summands in \eqref{E:t_D,H}
which are present in the decomposition of $t_{i,n}$.

On the other hand, since $t_{D,F}$ is $1$-Lipschitz, one gets:
\begin{equation}\label{E:t_DH_vs d}
|t_{D,F}(w)-t_{D,F}(z)|\le d(w,z).
\end{equation}

As a result,
\[\begin{split}
t_{D,F}(z)-t_{D,F}(w)&=\sum_{i=1}^n
(t_{D,F,i}(z)-t_{D,F,i}(w))\\&= \sum_{i=1}^n
|t_{D,F,i}(z)-t_{D,F,i}(w)|\\&=\sum_{i=1}^n
|t_{i,n}(z)-t_{i,n}(w)|.\end{split}
\]

The latter equalities have been established with the help of the
following observations: (1) All summands in \eqref{E:t_D,H} have
larger value at $z$ than at $w$; (2) All other functions of the
form $s_{\lambda,\theta,H}$ have the same values at $z$ and $w$.
To see this, it suffices to use the definition of
$s_{\lambda,\theta,H}$ in cases where $U_H$ contains either none
or both of $w,z$, in the latter case, we assume also that
$H\in\mh$ is not the smallest set for which this happens.
\medskip

Case \eqref{I:b}: Consider the maximal increasing sequence of sets
in $\mh$ of the form $D=D_1\subsetneq D_2\subsetneq\dots\subsetneq
D_n=F$. Construct  the function $t_{D,F}$ and complete the proof
as will be described in  Case \eqref{I:d}.\medskip

Case \eqref{I:c}: In this case, define $t_{D,F}$ as $s_{0,1,D}$
and use a simpler version of the argument of Cases \eqref{I:a} and
\eqref{I:b}.
\medskip

In Case \eqref{I:d}, consider the maximal increasing sequence of
sets in $\mh$ of the form $D=D_1\subsetneq
D_2\subsetneq\dots\subsetneq D_{\beta(z,w)}$.

Form the function $t_{D,F}$ as in  the first line of
\eqref{E:t_D,H}, and repeat the same argument as in Case
\eqref{I:a}.

In Case \eqref{I:e}, $t_{i,n}(w)=t_{i,n}(z)$ for every $i$, whence
the conclusion follows. This completes our proof of Lemma
\ref{L:FiniteMatch}.
\end{proof}

Now we shall use this result to prove Theorem \ref{T:Main}.

\begin{proof} Let a metric space $X$ be such that $\tc(X)$
contains a linear isometric copy of $\ell_1$. By Theorem
\ref{T:Contell_1}, this implies that there exists a sequence of
pairs $\{x_i,y_i\}_{i=1}^\infty$ in $X$, with all elements
distinct, such that each set $\{x_iy_i\}_{i=1}^n$ of edges is a
minimum weight perfect matching in the $K(\{x_i,y_i\}_{i=1}^n)$.

For each $n\in\mathbb{N}$, find functionals $\{t_{i,n}\}_{i=1}^n$
by applying Lemma \ref{L:FiniteMatch}.
 The next step in the proof
is to define $1$-Lipschitz functions $\{t_i\}_{i=1}^\infty$ on $X$
as weak$^*$ limits of the sequences $\{t_{i,n}\}_{n=i}^\infty$.
More precisely, we pick a free ultrafilter $\mju$ on $\mathbb{N}$
and let $t_i=w^*-\lim_{i,\mju}t_{i,n}$. (We may understand the
limit as pointwise after replacing  the functions $t_{i,n}$ with
$t_{i,n}(x)-t_{i,n}(O)$ for some base point $O$.) Then, we define
a mapping $P:\tc(X)\to\overline{\lin(\{\fs_i\}_{i=1}^\infty)}$ by
$P(f)=\sum_{i=1}^\infty t_i(f)\fs_i$.

The fact that the sequence $\{\fs_i\}_{i=1}^\infty$ is
isometrically equivalent to the unit vector basis of $\ell_1$ was
observed in \cite{KMO20} (and is easy to check).

Therefore, to justify that the map $P$ is well-defined and at the
same time it is a projection of norm $1$, it suffices to show that
for any $m\in\mathbb{N}$, there holds:
\[\left\|\sum_{i=1}^m t_i(f)\fs_i\right\|_\tc\le\|f\|_\tc.\]

However, this is true because, by Lemma \ref{L:FiniteMatch},
\[\left\|\sum_{i=1}^m t_{i,n}(f)\fs_i\right\|_\tc\le\|f\|_\tc\]
 and, in addition, $\sum_{i=1}^m t_i(f)\fs_i$ is a
(strong) limit of $\sum_{i=1}^m t_{i,n}(f)\fs_i$ as $n\to\infty$
through $\mju$.
\end{proof}

\subsubsection*{Acknowledgement:} The second-named author
gratefully acknowledges the support by the National Science
Foundation grant NSF DMS-1953773.

\renewcommand{\refname}{\section*{References}}

\begin{small}

\end{small}

\textsc{Department of Mathematics, Atilim University, 06830
Incek,\\ Ankara, TURKEY} \par \textit{E-mail address}:
\texttt{sofia.ostrovska@atilim.edu.tr}\par\medskip

\textsc{Department of Mathematics and Computer Science, St. John's
University, 8000 Utopia Parkway, Queens, NY 11439, USA} \par
  \textit{E-mail address}: \texttt{ostrovsm@stjohns.edu}

\end{document}